\documentclass[12pt, a4]{amsart}
\usepackage{amscd, amsmath, amssymb}
\usepackage{fullpage}
\theoremstyle{plain}
\newtheorem{thm}{Theorem}[section]
\newtheorem{lem}[thm]{Lemma}
\newtheorem{pro}[thm]{Proposition}
\theoremstyle{definition}

\newtheorem{ex}[thm]{Example}
\newtheorem{rem}[thm]{Remark}

\numberwithin{equation}{section}
\newcommand{\R}{\mathbb R}
\DeclareMathOperator{\sgn}{sgn}


\begin{document}

\title[BVP on the half-line with functional BCs]{Positive solutions of BVPs on the half-line involving functional BCs}
\author[G. Infante]{Gennaro Infante}
\address{Gennaro Infante, Dipartimento di Matematica e Informatica, Universit\`{a} della
Calabria, 87036 Arcavacata di Rende, Cosenza, Italy}
\email{gennaro.infante@unical.it}
\author[S. Matucci]{Serena Matucci}
\address{Serena Matucci, Department of Mathematics and
Computer Science ``Ulisse Dini'', University of Florence, 50139 Florence, Italy}
\email{serena.matucci@unifi.it}
\subjclass[2010]{Primary 34B40, secondary 34B10, 34B18}
\keywords{Fixed point index, cone, positive global solution, functional boundary condition.}
\begin{abstract}
We study the existence of positive solutions on the half-line of a second order ordinary differential equation subject to functional boundary conditions. Our approach relies on a combination between the fixed point index for operators on compact intervals, a fixed point result for operators on noncompact sets, and some comparison results for principal and nonprincipal solutions of suitable auxiliary linear equations.
\end{abstract}
\maketitle

\section{Introduction}
In this manuscript we discuss the existence of multiple non-negative solutions of the boundary value problem (BVP)
\begin{equation}\label{BVP-HL-intro}
\begin{cases}
(p(t)u'(t))'+f(t, u(t))=0,\ t\geq 0,\\
\alpha u(0)-\beta u'(0)=B[u],\ u(+\infty)=0,
\end{cases}
\end{equation}
where $p, f$ are continuous functions on their domains, $\alpha>0$, $\beta\geq 0$  and $B$ is a suitable functional with \emph{support} in~$[0,R]$.
The functional formulation of the boundary conditions (BCs) covers, \emph{as special cases}, the interesting setting of (not necessarily linear) multi-point and integral BCs; there exists a wide literature on this topic, we refer the reader to the recent paper~\cite{Chris} and references therein.

The approach that we use to solve the BVP~\eqref{BVP-HL-intro},
in the line of the papers \cite{dmm:p,dmm:d,ghz,mm:b,m:mb}, consists in considering two auxiliary BVPs separately, the first one on the compact interval $[0,R]$, where $B$ has support and $f$ is nonnegative, and the second one on the half-line $[R,\infty)$, where $f$ is allowed to change its sign. Unlike the above cited articles, in which the problem of  gluing the solutions is solved with some continuity arguments and an analysis in the phase space, here both the auxiliary problems have the same slope condition in the junction point (namely the condition $u'(R)=0$), which simplifies the arguments.
This kind of decomposition is some sort of an analogue of one employed by Boucherif and Precup~\cite{bp} utilized for equations with nonlocal initial conditions, where the associated nonlinear integral operator is decomposed into two parts, one of Fredholm-type (that takes into account the functional conditions) and another one of Volterra-type.

We make the following assumptions on the terms that occur in \eqref{BVP-HL-intro}.

\begin{itemize}
\item $p:[0,+\infty)\to (0,+\infty)$ is continuous, with
\begin{equation}\label{P}
 P=\int_R^{+\infty} \frac{1}{p(t)} \, dt <+\infty.
\end{equation}
\item $f:[0,+\infty)\times [0,+\infty)\to \R$ is continuous, with $f(t,v)\geq 0$ for $(t,v)\in [0,R]\times [0,+\infty)$, $f(t,0)=0$ for $t \geq 0$.
\item There exist two continuous functions $b_1, b_2: [R, +\infty)\to \R$, with $b_2\geq 0$, and two nondecreasing $C^1$-functions  $F_1, F_2: [0,+\infty)\to [0,+\infty)$, with $F_j(0)=0$, $F_j(v)>0$ for $v>0$, $j=1,2$, such that
     \begin{equation}
     b_1(t) F_1(v)\leq f(t,v)\leq b_2(t) F_2(v), \quad \text{for all } (t,v)\in [R,+\infty)\times [0,+\infty). \label{FF1}
     \end{equation}
\item For $j=1,2$
 \begin{equation}
 \limsup_{v \to 0^+} \frac{F_j(v)}{v}<+\infty. \label{FF2}
 \end{equation}
\item Let $b_1^-, b_1^+$ be the negative and the positive part of $b_1$, i.e. $b_1^-(t)=\max\{0, -b_1(t)\}$, $b_1^+(t)=\max\{0, b_1(t)\}$. Then
    \begin{align}
    &B_1^-=\int_R^{+\infty} b_1^-(t) \, dt <+\infty,\label{b1-}\\
      &\int_R^{+\infty} \frac{1}{p(t)} \int_R^{t} b_1^+(s)\, ds dt=+\infty,\label{b1+}
    \end{align}
\end{itemize}
Notice that \eqref{P}, \eqref{b1+} imply
\[
\int_R^\infty b_1^+(t) \, dt =+\infty,
\]
and, in particular, the function $b_1$ cannot be negative in a neighbourhood of infinity.

As we will show in Section 3 (see also Theorem~\ref{t:main2}), if the condition~\eqref{b1+} is not satisfied, then our approach leads to the existence of a bounded non-negative solution of the differential equation in~\eqref{BVP-HL-intro}, namely a solution of the problem
\begin{equation}\label{BVP-HL-f}
\begin{cases}
(p(t)u'(t))'+f(t, u(t))=0,\ t\geq 0,\\
\alpha u(0)-\beta u'(0)=B[u],\ u\geq 0 \text{ and  bounded on } [0, \infty).
\end{cases}
\end{equation}

The problem of the existence and multiplicity of the solutions for the equation in \eqref{BVP-HL-intro},
which are non-negative in the interval $[0, R]$ and satisfy the functional BCs and the additional assumptions at $u'(R)=0$, is considered in Section~2 and is solved by means of the classical fixed point index for compact maps.
A BVP on $[R, +\infty)$ is examined in Section~3, where we deal with the existence of positive global solutions
 which have zero initial slope and are bounded or tend to zero at infinity. This second problem is solved by using a fixed point
theorem for operators defined in a Frech\'et space, by a Schauder's linearization device, see~\cite[Theorem 1.3]{Furi}, and does not require the explicit form of the fixed point operator, but
only some a-priori bounds. These estimates are obtained using some properties of principal and nonprincipal
solutions of auxiliary second-order linear equations, see~\cite[Chapter 11]{H} and~\cite{DM}. Finally, the existence and multiplicity of solutions for the BVP~\eqref{BVP-HL-intro} and~\eqref{BVP-HL-f} is obtained in Section~4, thanks to the fact that the problem in $[R, +\infty)$ has at least a solution for every initial value $u(R)$ sufficiently small.
An example completes the paper.

\section{An auxiliary BVP on the compact interval $[0,R]$}
In this Section we investigate the existence of multiple positive solutions of the BVP
\begin{equation}\label{Aux-BVP-comp}
\begin{cases}
(p(t)u'(t))'+f(t,u(t))=0,\ t \in[0,R],\\
\alpha u(0)-\beta u'(0)=B[u],\ u'(R)=0.
\end{cases}
\end{equation}
First of all we recall some results regarding the linear BVP
\begin{equation}\label{Aux-BVP-comp-lin}
\begin{cases}
-(p(t)u'(t))'=0,\quad t \in[0,R],\\
\alpha u(0)-\beta u'(0)=0,\ u'(R)=0.
\end{cases}
\end{equation}
It is known, see for example~\cite{kql-blms-06}, that the Green's function $k$ for the BVP~\eqref{Aux-BVP-comp-lin} is given by
$$
 k(t,s):=\frac{1}{\alpha}\begin{cases}
 \frac{\beta}{p(0)}+\alpha \int_{0}^{s}\frac{1}{p(\mu)}\, d\mu, &s\leq t,\\
 \frac{\beta}{p(0)}+\alpha \int_{0}^{t}\frac{1}{p(\mu)}\, d\mu,\ &s\geq t.
\end{cases}
$$
and satisfies the inequality (see~\cite[Lemma 2.1]{kql-blms-06})
$$
c(t)\Phi(s)\leq k(t,s)\leq \Phi(s),\ (t,s)\in [0,R]^2,
$$
where
\begin{equation}\label{phi-c}
\Phi(s):=\frac{\beta}{\alpha p(0)}+ \int_{0}^{s}\frac{1}{p(\mu)}\, d\mu,\ \text{and}\ c(t):=\frac{ \frac{\beta}{p(0)}+\alpha \int_{0}^{t}\frac{1}{p(\mu)}\, d\mu}{ \frac{\beta}{p(0)}+\alpha \int_{0}^{R}\frac{1}{p(\mu)}\, d\mu}.
\end{equation}
Note that the constant function $\dfrac{1}{\alpha}$ solves the BVP
\begin{equation*}
\begin{cases}
-(p(t)u'(t))'=0,\quad t \in[0,R],\\
\alpha u(0)-\beta u'(0)=1,\ u'(R)=0.
\end{cases}
\end{equation*}
We associate to the BVP~\eqref{Aux-BVP-comp} the perturbed Hammerstein integral equation
\begin{equation}\label{phamm}
u(t)=Fu(t)+\frac{H[u]}{\alpha}:=Tu(t),
\end{equation}
where
$$
Fu(t):= \int_0^R k(t,s)f(s,u(s))\,ds.
$$

We seek fixed points of the operator $T$ in a suitable cone of  the space of continuous functions $C[0,R]$, endowed with the usual norm
 $\|w\|:=\max\{|w(t)|,\; t\; \in [0,R]\}$.

We recall that a \emph{cone} $K$ in a Banach space $X$  is a closed
convex set such that $\lambda \, x\in K$ for $x \in K$ and
$\lambda\geq 0$ and $K\cap (-K)=\{0\}$.
In the following Proposition we recall the main properties of the classical fixed point index for compact maps, for more details see~\cite{Amann-rev, guolak}. In what follows the closure and the boundary of subsets of a cone $K$ are understood to be relative to $K$.

\begin{pro}\label{propindex}
Let $X$ be a real Banach space and let $K\subset X$ be a cone. Let $D$ be an open bounded set of $X$ with $0\in D_{K}$ and
$\overline{D}_{K}\ne K$, where $D_{K}=D\cap K$.
Assume that $T:\overline{D}_{K}\to K$ is a compact operator such that
$x\neq Tx$ for $x\in \partial D_{K}$. Then the fixed point index
 $i_{K}(T, D_{K})$ has the following properties:

\begin{itemize}

\item[$(i)$] If there exists $e\in K\setminus \{0\}$
such that $x\neq Tx+\lambda e$ for all $x\in \partial D_{K}$ and all
$\lambda>0$, then $i_{K}(T, D_{K})=0$.

\item[$(iii)$] If $Tx \neq \lambda x$ for all $x\in
\partial D_{K}$ and all $\lambda > 1$, then $i_{K}(T, D_{K})=1$.

\item[(iv)] Let $D^{1}$ be open bounded in $X$ such that
$\overline{D^{1}_{K}}\subset D_{K}$. If $i_{K}(T, D_{K})=1$ and $i_{K}(T,
D_{K}^{1})=0$, then $T$ has a fixed point in $D_{K}\setminus
\overline{D_{K}^{1}}$. The same holds if
$i_{K}(T, D_{K})=0$ and $i_{K}(T, D_{K}^{1})=1$.
\end{itemize}
\end{pro}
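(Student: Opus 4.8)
The plan is to recall that the fixed point index $i_K(T,D_K)$ for compact maps on a cone is built from the Leray--Schauder degree, and then to deduce the three listed properties from the standard axioms of the index: normalization, additivity (excision), invariance under admissible compact homotopies, and the solution property ($i_K(T,D_K)\neq 0$ forces a fixed point in $D_K$). Concretely, since $K$ is a closed convex subset of $X$, Dugundji's extension theorem provides a retraction $\rho\colon X\to K$; for a compact $T\colon\overline{D}_K\to K$ with $x\neq Tx$ on $\partial D_K$ one sets $i_K(T,D_K):=\deg_{\mathrm{LS}}(I-T\rho,\rho^{-1}(D_K),0)$ and checks that this is independent of the chosen retraction and extension and enjoys the four axioms above. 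This verification is the technical heart of the construction and is exactly the material deferred to \cite{Amann-rev, guolak}; I would simply cite it. The remaining task, and all that is really needed for the paper, is to derive $(i)$, $(iii)$, $(iv)$ from the axioms by short homotopy arguments.

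Property $(iv)$ is immediate from additivity: since $\overline{D^1_K}\subset D_K$ and $T$ has no fixed point on $\partial D^1_K\cup\partial D_K$, one has $i_K(T,D_K)=i_K(T,D^1_K)+i_K(T,D_K\setminus\overline{D^1_K})$. If $i_K(T,D_K)=1$ and $i_K(T,D^1_K)=0$ the third term equals $1\neq0$, so the solution property yields a fixed point of $T$ in $D_K\setminus\overline{D^1_K}$; the case $i_K(T,D_K)=0$, $i_K(T,D^1_K)=1$ is identical.

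For $(iii)$, consider the compact homotopy $h(\lambda,x)=\lambda Tx$, $\lambda\in[0,1]$. It is admissible on $\partial D_K$: a solution of $x=\lambda Tx$ with $x\in\partial D_K$ cannot have $\lambda=0$ (else $x=0\in D_K$, which is an interior point), and if $\lambda\in(0,1]$ then $Tx=\lambda^{-1}x$ with $\lambda^{-1}\geq1$, contradicting $x\neq Tx$ on $\partial D_K$ when $\lambda=1$ and the hypothesis $Tx\neq\mu x$ for $\mu>1$ when $\lambda<1$. Homotopy invariance and normalization (recall $0\in D_K$) then give $i_K(T,D_K)=i_K(0,D_K)=1$. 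For $(i)$, fix $e\in K\setminus\{0\}$ as in the hypothesis and consider $h(\lambda,x)=Tx+\lambda e$, $\lambda\geq0$. Since $\overline{D}_K$ is bounded and $T(\overline{D}_K)$ is relatively compact, $M:=\sup\{\|Tx\|:x\in\overline{D}_K\}<+\infty$; picking $\lambda_0>(M+\sup_{x\in\overline{D}_K}\|x\|)/\|e\|$ forces $\|Tx+\lambda_0 e\|>\|x\|$ for every $x\in\overline{D}_K$, so $T(\cdot)+\lambda_0 e$ has no fixed point in $\overline{D}_K$ and hence, by excision to the empty set, $i_K(T(\cdot)+\lambda_0 e,D_K)=0$. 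The hypothesis ($x\neq Tx+\lambda e$ for $\lambda>0$) together with $x\neq Tx$ on $\partial D_K$ makes $h$ admissible on $\partial D_K$ for all $\lambda\in[0,\lambda_0]$, so homotopy invariance gives $i_K(T,D_K)=i_K(T(\cdot)+\lambda_0 e,D_K)=0$.

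The only genuinely nontrivial point is the construction of the index itself, i.e.\ showing that $\deg_{\mathrm{LS}}(I-T\rho,\rho^{-1}(D_K),0)$ does not depend on the retraction and extension used and that it localizes correctly to $K$ so as to satisfy the axioms; I expect this to be the main obstacle, and it is precisely what one borrows from \cite{Amann-rev, guolak}. Once the axioms are available, $(i)$, $(iii)$ and $(iv)$ follow as above with no further difficulty.
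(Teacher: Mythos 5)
Your proposal is correct: the paper offers no proof of this Proposition at all, presenting it as standard material recalled from \cite{Amann-rev, guolak}, which is exactly the construction you defer to those same references. Your derivations of $(i)$, $(iii)$ and $(iv)$ from the index axioms (homotopy $\lambda Tx$ with normalization at $0\in D_K$, the translated homotopy $Tx+\lambda e$ pushed past the bound of $\overline{D}_K$, and additivity plus the solution property) are the standard ones and are sound, so there is nothing to reconcile with the paper beyond noting that it simply cites the literature.
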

The assumptions above allow us to work in the cone
\begin{equation}\label{cone}
K:=\{u\in C[0,R]:\,u\geq 0,\,\, \min_{t\in [a,b]}u(t)\ge c\|u\|\},
\end{equation}
a type of cone firstly used by Krasnosel'ski\u\i{}, see~\cite{krzab}, and D.~Guo, see e.g.~\cite{guolak}.
In~\eqref{cone} $[a,b]$ is a suitable subinterval of $[0,R]$ and $c:=\min_{t\in[a,b]}c(t)$, with $c(t)$ given by~\eqref{phi-c}. We have freedom of choice of $[a,b]$, with the restriction $a>0$ when $\beta=0$. Note also that the constant function equal to $r\geq 0$ (that we denote, with abuse of notation $r$) belongs to $K$, so $K\neq \{0\}$.

Regarding the functional $H$ we assume that
\begin{itemize}
\item$H: K\to [0,+\infty)$ is continuous and maps bounded sets in bounded sets.
\end{itemize}
With these ingredients it is routine to show that $T$ leaves $K$ invariant and is compact.
We make use of the following open bounded set (relative to $K$)
$$
K_{\rho}:=\{u\in K: \|u\|<\rho\}.
$$

We now employ some \emph{local} upper and lower estimates for the functional $H$, in the spirit of ~\cite{gi-tmna, gi-dcds}.
We begin with
a condition which implies that the index is $1$.
\begin{lem} \label{ind11}
Assume that
\begin{enumerate}
\item[$(\mathrm{I}^{1}_{\rho})$]
there exists
 $\rho>0$, such that the following algebraic inequality holds:
\begin{equation}\label{indice12}
 \frac{1}{m} \overline{f}_{\rho} +\frac{1}{\alpha} \overline{H}_{\rho}< \rho,
\end{equation}
where
$$
\overline{f}_{\rho}:=\max_{(t,u)\in [0,R]\times [0,\rho]}f(t,u),\  \overline{H}_{\rho}:=\sup_{u \in \partial K_{\rho}}H[u]\ \ \text{and}\
\frac{1}{m}:=\sup_{t \in[0,R]}\int_{0}^{R} k(t,s)\,ds.
$$
\end{enumerate}
Then $i_{K}(T,K_{\rho})$ is $1$.
\end{lem}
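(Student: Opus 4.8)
The plan is to derive the conclusion from part~$(iii)$ of Proposition~\ref{propindex}, applied with $D_{K}=K_{\rho}$: it suffices to prove that $Tu\neq\lambda u$ for every $u\in\partial K_{\rho}$ and every $\lambda>1$. In fact I would argue directly for every $\lambda\geq 1$, which simultaneously shows that $T$ is fixed-point-free on $\partial K_{\rho}$ (the case $\lambda=1$), so that the index $i_{K}(T,K_{\rho})$ is well defined, and verifies the hypothesis of $(iii)$ (the case $\lambda>1$). First I would record that the abstract framework is in force: $K_{\rho}$ is open and bounded in $C[0,R]$, one has $0\in K_{\rho}$, and $\overline{K_{\rho}}\neq K$ since, e.g., the constant function $2\rho$ belongs to $K\setminus\overline{K_{\rho}}$; together with the compactness of $T$ on $\overline{K_{\rho}}$ and the invariance $T(K)\subseteq K$ already noted, this places us within the scope of Proposition~\ref{propindex}.

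Next comes the one key estimate. Suppose, for contradiction, that there exist $u\in\partial K_{\rho}$ and $\lambda\geq 1$ with $Tu=\lambda u$. Since $u\in K$ we have $u(s)\geq 0$ for all $s\in[0,R]$, and since $\|u\|=\rho$ we get $0\leq u(s)\leq\rho$ on $[0,R]$; hence $f(s,u(s))\leq\overline{f}_{\rho}$ for every $s$, while $H[u]\leq\overline{H}_{\rho}$ because $u\in\partial K_{\rho}$. Using the explicit form of $T$ in~\eqref{phamm} and the definition of $m$, for every $t\in[0,R]$ I would estimate
\begin{equation*}
Tu(t)=\int_{0}^{R}k(t,s)f(s,u(s))\,ds+\frac{H[u]}{\alpha}\leq\overline{f}_{\rho}\int_{0}^{R}k(t,s)\,ds+\frac{\overline{H}_{\rho}}{\alpha}\leq\frac{1}{m}\,\overline{f}_{\rho}+\frac{1}{\alpha}\,\overline{H}_{\rho}.
\end{equation*}
Taking the maximum over $t\in[0,R]$ and invoking the algebraic inequality~\eqref{indice12} in $(\mathrm{I}^{1}_{\rho})$ gives $\|Tu\|\leq\frac{1}{m}\overline{f}_{\rho}+\frac{1}{\alpha}\overline{H}_{\rho}<\rho$. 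On the other hand $\|Tu\|=\lambda\|u\|=\lambda\rho\geq\rho$, a contradiction.

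Therefore $Tu\neq\lambda u$ for all $u\in\partial K_{\rho}$ and all $\lambda\geq 1$, in particular for all $\lambda>1$, and Proposition~\ref{propindex}$(iii)$ yields $i_{K}(T,K_{\rho})=1$. I do not expect a genuine obstacle here: the argument is a single norm estimate, and the only points that deserve a moment's care are the finiteness of $1/m$ (which follows from the continuity of $k$ on the compact square $[0,R]^{2}$), the verification that the index is indeed well defined on $\partial K_{\rho}$, and the observation that membership of $u$ in the cone $K$ forces $u(s)\in[0,\rho]$, so that $\overline{f}_{\rho}$ is a legitimate pointwise bound for $f(s,u(s))$ and $\overline{H}_{\rho}$ a legitimate bound for $H[u]$.
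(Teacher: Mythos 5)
Your proposal is correct and follows essentially the same route as the paper: assume $Tu=\lambda u$ (equivalently $\mu u = Tu$) for some $u\in\partial K_{\rho}$ and $\lambda\geq 1$, bound $Tu(t)$ pointwise by $\frac{1}{m}\overline{f}_{\rho}+\frac{1}{\alpha}\overline{H}_{\rho}$, take the supremum over $t$, and invoke \eqref{indice12} to reach the contradiction $\lambda\rho<\rho$, then conclude via Proposition~\ref{propindex}$(iii)$. The extra remarks on the well-definedness of the index and the finiteness of $1/m$ are harmless additions to what the paper treats as routine.
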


\begin{proof}
Note that if
$u \in \partial K_{\rho}$  then we have $0\le u(t) \le \rho$ for every $t \in [0,R]$.
We prove that $\mu\, u\neq Tu$ for every $\mu \geq 1$ and $u\in \partial K_{\rho}$.
In fact, if this does
not happen, there exist $\mu \geq 1$ and $u\in \partial K_{\rho}$ such that, for every $t \in [0,R]$, we have
\begin{equation*}
\mu\, u(t)= Tu(t)=Fu(t)+\frac{1}{\alpha}H[u].
\end{equation*}
Then we obtain, for $t\in [0,R]$,
\begin{equation}\label{rel}
\mu\, u(t)\leq \int_{0}^{R} k(t,s)\overline{f}_{\rho}ds + \frac{1}{\alpha} \overline{H}_{\rho}\leq  \frac{1}{m} \overline{f}_{\rho} +\frac{1}{\alpha} \overline{H}_{\rho}.
 \end{equation}
Taking the supremum for $t\in [0,R]$ in \eqref{rel}
and using the inequality~\eqref{indice12} we obtain
$\mu \rho <\rho$, a contradiction that proves the result.
\end{proof}
Now we give a  condition which implies that the index is $0$ on the set $K_{\rho}$.
\begin{lem} \label{indice0}
Assume that
\begin{enumerate}
\item[$(\mathrm{I}^{0}_{\rho})$]
there exists
$\rho>0$ such that the following algebraic inequality holds:
\begin{equation}\label{ind 0 2 +}
\frac{1}{M}\underline{f}_{\rho}+\frac{1}{\alpha}\underline{H}_{\rho}[u]>\rho,
\end{equation}
where
$$
\underline{f}_{\rho}:=\min_{(t,u)\in [a,b]\times [c\rho,\rho]}f(t,u),\  \underline{H}_{\rho}:=\inf_{u \in \partial K_{\rho}}H[u]\ \ and\
\frac{1}{M}:=\inf_{t \in[a,b]}\int_{a}^{b} k(t,s)\,ds.
$$
\end{enumerate}
Then $i_{K}(T,K_{\rho})$ is $0$.
\end{lem}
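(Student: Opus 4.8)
The plan is to invoke property~$(i)$ of Proposition~\ref{propindex} with the constant function $e\equiv 1$, which belongs to $K\setminus\{0\}$ as observed after~\eqref{cone}. I would argue by contradiction: suppose there are $u\in\partial K_\rho$ and $\lambda>0$ with $u=Tu+\lambda e$, that is,
\[
u(t)=Fu(t)+\frac{1}{\alpha}H[u]+\lambda,\qquad t\in[0,R].
\]

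The key step is to localize this identity on the subinterval $[a,b]$. Since $u\in\partial K_\rho$, we have $\|u\|=\rho$ and, by the cone condition in~\eqref{cone}, $c\rho\le u(s)\le\rho$ for all $s\in[a,b]$, whence $f(s,u(s))\ge\underline f_\rho$ there. Using that $k$ and $f$ are nonnegative to discard the part of the integral outside $[a,b]$, that $H[u]\ge\underline H_\rho$, and that $\int_a^b k(t,s)\,ds\ge 1/M$ for $t\in[a,b]$ (which is the definition of $1/M$), I obtain, for every $t\in[a,b]$,
\[
u(t)\ \ge\ \int_a^b k(t,s)\,\underline f_\rho\,ds+\frac{1}{\alpha}\,\underline H_\rho+\lambda\ \ge\ \frac{1}{M}\,\underline f_\rho+\frac{1}{\alpha}\,\underline H_\rho+\lambda .
\]

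Evaluating this inequality at any fixed $t\in[a,b]$ and using $u(t)\le\|u\|=\rho$ together with hypothesis $(\mathrm{I}^0_\rho)$ gives $\rho\ge\frac{1}{M}\underline f_\rho+\frac{1}{\alpha}\underline H_\rho+\lambda>\rho+\lambda$, which contradicts $\lambda>0$. Hence $u\neq Tu+\lambda e$ for every $u\in\partial K_\rho$ and every $\lambda>0$, and property~$(i)$ of Proposition~\ref{propindex} yields $i_K(T,K_\rho)=0$. I do not expect a genuine obstacle here: the only delicate points are the reduction of the integral to $[a,b]$ (legitimate because $k\ge 0$ and $f\ge 0$ on $[0,R]\times[0,+\infty)$) and the use of the cone inequality $\min_{t\in[a,b]}u(t)\ge c\|u\|$ to ensure that the pointwise lower bound $f(s,u(s))\ge\underline f_\rho$ actually applies for $s\in[a,b]$.
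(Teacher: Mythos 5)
Your proposal is correct and follows essentially the same route as the paper: both invoke property $(i)$ of Proposition~\ref{propindex} with $e\equiv 1$, localize the identity on $[a,b]$ using the cone inequality $u(s)\ge c\rho$ and the nonnegativity of $k$ and $f$, and derive the contradiction from $(\mathrm{I}^0_\rho)$. The only cosmetic difference is that the paper discards the term $\lambda$ at once (proving the claim for all $\lambda\ge 0$) and contradicts $\rho>\rho$, whereas you keep $\lambda>0$ until the end; both are valid.
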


\begin{proof}
Note that the constant function $1$ belongs to $K$. We prove that $u\not=Tu+\lambda 1$ for every $u\in \partial K_{\rho}$ and for every $\lambda \geq 0$. If this is false, there exist $u\in \partial K_{\rho}$ and $\lambda \geq 0$ such that
$u=Tu+\lambda 1$. Then we have, for $t\in[a,b]$,
\begin{multline}\label{Eq0}
  \rho \geq  u(t)=Fu(t)+\frac{1}{\alpha}H[u]+ \lambda 1\geq Fu(t)+\frac{1}{\alpha}H[u]\\ \geq  \int_{a}^{b} k(t,s)f(s,u(s))\,ds +\frac{1}{\alpha}H[u]
   \geq  \int_{a}^{b} k(t,s)\underline{f}_{\rho}\,ds +\frac{1}{\alpha}\underline{H}_{\rho}[u]\geq \frac{1}{M}\underline{f}_{\rho}+\frac{1}{\alpha}\underline{H}_{\rho}[u].
  \end{multline}
Using the inequality~\eqref{ind 0 2 +} in~\eqref{Eq0} we obtain
$\rho >\rho$,
a contradiction that proves the result.
\end{proof}
In view of the Lemmas above, we may state our  result regarding the existence of one or more nontrivial solutions. Here, for brevity, we provide sufficient conditions for the existence of one, two or three solutions. It is possible to obtain more solutions, by adding more conditions of the same type, see for example~\cite{kqljlms}.
\begin{thm}
\label{thmmsol1}
The BVP~\eqref{Aux-BVP-comp} has at least one non-negative solution $u_1$, with $\rho_1< u_1(R)< \rho_2$ if either of the following conditions holds.
\begin{enumerate}
\item[$(S_{1})$] There exist $\rho _{1},\rho _{2}\in (0, +\infty )$ with $\rho
_{1}<\rho _{2}$ such that $(\mathrm{I}_{\rho _{1}}^{0})$ and $(\mathrm{I}_{\rho _{2}}^{1})$ hold.
\item[$(S_{2})$] There exist $\rho _{1},\rho _{2}\in (0, +\infty )$ with $\rho
_{1}<\rho _{2}$ such that $(\mathrm{I}_{\rho _{1}}^{1})$ and $(\mathrm{I}%
_{\rho _{2}}^{0})$ hold.
\end{enumerate}
The BVP~\eqref{Aux-BVP-comp} has at least two non-negative solutions $u_1$ and $u_2$, with $\rho_1< u_1(R)< \rho_2< u_2(R)< \rho_3$, if either of the following conditions holds.
\begin{enumerate}
\item[$(S_{3})$] There exist $\rho _{1},\rho _{2},\rho _{3}\in (0, +\infty )$
with $\rho _{1}<\rho _{2}<\rho _{3}$ such that $(\mathrm{I}_{\rho
_{1}}^{0}),$ $(
\mathrm{I}_{\rho _{2}}^{1})$ $\text{and}\;\;(\mathrm{I}_{\rho _{3}}^{0})$
hold.
\item[$(S_{4})$] There exist $\rho _{1},\rho _{2},\rho _{3}\in (0, +\infty )$
with $\rho _{1}<\rho _{2}<\rho _{3}$ such that $(\mathrm{I}%
_{\rho _{1}}^{1}),\;\;(\mathrm{I}_{\rho _{2}}^{0})$ $\text{and}\;\;(\mathrm{I%
}_{\rho _{3}}^{1})$ hold.
\end{enumerate}
The BVP~\eqref{Aux-BVP-comp} has at least three non-negative solutions $u_1$, $u_2$ and $u_3$, with $\rho_1< u_1(R)< \rho_2< u_2(R)< \rho_3< u_3(R)< \rho_4$ if either of the following conditions holds.
\begin{enumerate}
\item[$(S_{5})$] There exist $\rho _{1},\rho _{2},\rho _{3},\rho _{4}\in
(0, +\infty )$ with $\rho _{1}<\rho _{2}<\rho _{3}<\rho
_{4}$ such that $(\mathrm{I}_{\rho _{1}}^{0}),$ $(\mathrm{I}_{\rho _{2}}^{1}),$ $(\mathrm{I}%
_{\rho _{3}}^{0})$ and $(\mathrm{I}_{\rho _{4}}^{1})$ hold.
\item[$(S_{6})$] There exist $\rho _{1},\rho _{2},\rho _{3},\rho _{4}\in
(0, +\infty )$ with $\rho _{1}<\rho _{2}<\rho _{2}<\rho _{3}<\rho _{4}$
such that $(\mathrm{I}_{\rho _{1}}^{1}),$ $(\mathrm{I}_{\rho
_{2}}^{0}),$ $(\mathrm{I}_{\rho _{3}}^{1})$ and $(\mathrm{I}%
_{\rho _{4}}^{0})$ hold.
\end{enumerate}
\end{thm}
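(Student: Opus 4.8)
The plan is to combine the index computations of Lemmas~\ref{ind11} and~\ref{indice0} with the properties of the fixed point index collected in Proposition~\ref{propindex}, following the classical Krasnosel'ski\u\i{}--Guo scheme. Recall that $T$ is already known to leave $K$ invariant and to be compact, so $i_K(T,K_\rho)$ is defined whenever $T$ is fixed-point free on $\partial K_\rho$.

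First I would record the elementary observation that every fixed point $u\in K$ of $T$ is nondecreasing on $[0,R]$: from the explicit formula for $k$ one sees that $t\mapsto k(t,s)$ is nondecreasing for each fixed $s\in[0,R]$, hence $Fu$ is nondecreasing, and since $H[u]/\alpha$ is a nonnegative constant, so is $u=Fu+H[u]/\alpha$. In particular $\|u\|=u(R)$ for every such $u$, which is what will let me phrase the localization of the solutions through the value $u(R)$ rather than through the norm.

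Next, for each threshold $\rho_i$ appearing in the hypotheses I would invoke the appropriate lemma: condition $(\mathrm{I}^{1}_{\rho_i})$ gives $i_K(T,K_{\rho_i})=1$ by Lemma~\ref{ind11}, while $(\mathrm{I}^{0}_{\rho_i})$ gives $i_K(T,K_{\rho_i})=0$ by Lemma~\ref{indice0}; in either case the proof of the lemma also rules out fixed points on $\partial K_{\rho_i}$, so no fixed point of $T$ will have norm exactly $\rho_i$. Since $\rho_1<\rho_2<\cdots$ we have $\overline{K_{\rho_i}}\subset K_{\rho_{i+1}}$, so property~(iv) of Proposition~\ref{propindex} applied to each consecutive pair $(K_{\rho_i},K_{\rho_{i+1}})$ — whose indices are $0,1$ or $1,0$ in every case listed — produces a fixed point $u$ of $T$ in $K_{\rho_{i+1}}\setminus\overline{K_{\rho_i}}$, that is, with $\rho_i<\|u\|=u(R)<\rho_{i+1}$. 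Under $(S_1)$ or $(S_2)$ this gives the single solution $u_1$; under $(S_3)$ or $(S_4)$ I would apply it to the pairs $(\rho_1,\rho_2)$ and $(\rho_2,\rho_3)$ to get $u_1,u_2$; under $(S_5)$ or $(S_6)$ to the three consecutive pairs to get $u_1,u_2,u_3$. Distinctness is automatic, since the annuli are pairwise disjoint, being separated by the spheres $\{u\in K:\|u\|=\rho_i\}$ on which $T$ has no fixed point. To conclude, I would recall that any fixed point of $T$ solves the perturbed Hammerstein equation~\eqref{phamm} and is therefore a non-negative solution of the BVP~\eqref{Aux-BVP-comp}.

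I do not expect a serious obstacle here: once Lemmas~\ref{ind11} and~\ref{indice0} are in hand, the argument is essentially bookkeeping with the index. The only step that deserves a moment's care is the identification $\|u\|=u(R)$, which rests on the monotonicity in $t$ of the Green's function $k(t,s)$; without that remark one would only obtain the weaker localization $\rho_i<\|u_i\|<\rho_{i+1}$.
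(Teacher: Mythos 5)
Your proposal is correct and follows essentially the same route as the paper: compute the index on each $K_{\rho_i}$ via Lemmas~\ref{ind11} and~\ref{indice0}, extract fixed points in the nested annuli using property~(iv) of Proposition~\ref{propindex}, and use the monotonicity of $t\mapsto k(t,s)$ (the paper does this by computing $\partial k/\partial t\geq 0$, which gives $u'\geq 0$) to identify $\|u\|$ with $u(R)$. No gaps.
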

\begin{proof}
Assume condition $(S_{1})$ holds, then, by Lemma~\ref{indice0}, we have $i_{K}(T,K_{\rho_1})=0$ and, by Lemma~\ref{ind11}, $i_{K}(T,K_{\rho_2})=1$.
By Proposition~\ref{propindex} we obtain a solution $u_1$ for the integral equation~\eqref{phamm}
in $K_{\rho_2}\setminus \overline{K}_{\rho_1}$. Furthermore note that
$$
 \frac{\partial}{\partial t}k(t,s):=\frac{1}{\alpha}
 \begin{cases}
0,\ &s< t,\\
\frac{1}{p(t)},\ &s> t,
\end{cases}
$$
and therefore $u_1'\geq 0$ in $[0,R]$, which, in turn, implies $u_1(R)=\|u_1\|$.

Assume now that condition $(S_{3})$ holds, then we obtain in addition that $i_{K}(T,K_{\rho_3})=0$. By Proposition~\ref{propindex} we obtain the existence of a second solution $u_2$ of the integral equation~\eqref{phamm} in $K_{\rho_3}\setminus \overline{K}_{\rho_2}$. A similar argument as above holds for the monotonicity of~$u_2$.

The remaining cases are dealt with in a similar way.
\end{proof}

\section{An auxiliary BVP on  the half-line $[R, \infty]$}

In this section we state sufficient conditions for the existence of solutions of the following~BVP
\begin{align}
&(p(t)u'(t))'+f(t, u(t))=0,\ t \geq R, \label{Eq2}\\
&u'(R)=0, \, u(R)=u_0, \, u(t)>0, \, \lim_{t \to \infty} u(t)=0,\label{C:Eq2}
\end{align}
where $u_0>0$ is a given constant.
\medskip
The BVP \eqref{Eq2}, \eqref{C:Eq2} involves both initial and asymptotic conditions, and a global condition (i.e., the positivity on the whole half-line). The continuability at infinity of solutions of (\ref{Eq2})
is not a simple problem, see for example~\cite{Butler}. For instance, the Emden--Fowler
equation
\begin{equation}
\bigl(p(t)u^{\prime}(t)\bigr)^{\prime}+g(t)|u(t)|^{\beta}\sgn (u(t))=0,\label{QL}%
\end{equation}
if $\beta>1$ and $g$ is allowed to take negative
values, has solutions which tend to infinity in finite time, see
\cite{Butler, BG}. Moreover, again in the superlinear case $\beta>1$, if $g$ is non-negative with isolated
zeros, then (\ref{QL}) may have solutions which change sign infinitely many times
in the left neighborhood of some $\bar{t}>R$, and so
these solutions are not continuable to infinity, see \cite{Coff}.
Further, even if global solutions exist (that is, solutions which are defined in the whole half-line $[R, +\infty)$), their positivity is not guaranteed in general. Indeed, \eqref{Eq2} may exhibit the coexistence of nonoscillatory and oscillatory solutions; further, nonoscillatory solutions may have an arbitrary large number of zeros.

The problem \eqref{Eq2}, \eqref{C:Eq2} has been consider in \cite{DM} for nonlinear equations with  $p$-laplacian  operator and nonlinear term $f(t,u(t))=b(t)F(u(t))$. We address the reader to such a paper  for a complete discussion on the issues related to the BVP \eqref{Eq2}, \eqref{C:Eq2} and for a review of the existing literature on related problems. The approach used in \cite{DM} to solve the BVP was based on a combination of the Schauder's (half)-linearization device, a fixed point result in the Fr\'echet space of continuous functions on $[R, +\infty)$, and comparison results for principal and nonprincipal solutions of suitable auxiliary half-linear equations, which allow to find good upper and lower bounds for the solutions of the (half)-linearized problem. The same approach, with minor modifications, allow us to treat also the present case with a general nonlinearity $f(t, u(t))$, under the assumptions \eqref{FF1}, \eqref{FF2}. In the following Proposition we recall the fixed-point result \cite[Theorem 1]{DM}, based on \cite[Theorem 1.3]{Furi}, in the form suitable for the present problem.
\begin{pro}\label{tcfm}
Let $J=[t_{0},\infty).$ Consider
the BVP
\begin{equation}\label{bvpo}
\begin{cases}
(p(t)u^{\prime})^{\prime}+f(t,u)=0, &t\in J,\\
u\in S,
\end{cases}
\end{equation}
where $f$ is a continuous function on $J\times\mathbb{R}$ and $S$ is a subset
of $C^{1}(J,\mathbb{R})$. Let $g$ be a continuous function on $J\times
\mathbb{R}^{2}$ such that
\[
g(t,c,c)=f(t,c)\quad\text{for all }(t,c)\in J\times\mathbb{R},
\]
and assume that there exist a closed convex subset $\Omega$ of $C^{1}(J,\mathbb{R})$ and a bounded closed subset $S_{1}$ of $S\cap\Omega$ which
make the problem
\begin{equation}
\label{bvpoh}
\begin{cases}
(p(t)y^{\prime})^{\prime}+g(t,y,q)=0, & t\in J,\\
y\in S_{1}
\end{cases}
\end{equation}
uniquely solvable for all $q\in\Omega$. Then the BVP (\ref{bvpo}) has at least
one solution in $\Omega$.
\end{pro}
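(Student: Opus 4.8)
The plan is to recast \eqref{bvpo} as a fixed point problem for the \emph{solution operator} of the linearized equation \eqref{bvpoh}, and then to apply the Schauder--Tychonoff fixed point theorem in the Fr\'echet space $C^{1}(J,\R)$ equipped with the topology of uniform convergence of functions and of first derivatives on the compact subintervals of $J$. Concretely, for each $q\in\Omega$ the hypothesis provides a \emph{unique} $y\in S_{1}$ solving \eqref{bvpoh}; denote it $y=\mathcal{F}(q)$. Then $\mathcal{F}:\Omega\to S_{1}\subseteq S\cap\Omega$ is a well-defined map with $\mathcal{F}(\Omega)\subseteq\Omega$. If $u=\mathcal{F}(u)$ is a fixed point, then $u\in S_{1}$ and $(p(t)u')'+g(t,u,u)=0$; since $g(t,c,c)=f(t,c)$ this reads $(p(t)u')'+f(t,u)=0$, and $u\in S_{1}\subseteq S\cap\Omega$, so $u$ is exactly the solution we want. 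Thus everything reduces to producing a fixed point of $\mathcal{F}$.

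To invoke Schauder--Tychonoff I would verify, in order: (i) $\Omega$ is a nonempty closed convex subset of the locally convex space $C^{1}(J,\R)$, which is part of the hypotheses (nonemptiness coming from the solvability of \eqref{bvpoh}); (ii) $\mathcal{F}(\Omega)\subseteq\Omega$, already observed; (iii) $\overline{\mathcal{F}(\Omega)}$ is compact; (iv) $\mathcal{F}$ is continuous. For (iii), since $\mathcal{F}(\Omega)\subseteq S_{1}$ and $S_{1}$ is closed, it suffices to show that $\mathcal{F}(\Omega)$ is relatively compact: every $y=\mathcal{F}(q)$ lies in the bounded set $S_{1}$, so $y$ and $y'$ are equibounded on each compact $[t_{0},T]\subseteq J$; rewriting the equation as $(p(t)y')'=-g(t,y,q)$ and using the continuity of $g$, $p$ and $1/p$ together with these a priori bounds, one gets that $\{p\,y'\}$ is equi-Lipschitz, hence $\{y'\}$ is equicontinuous, and then so is $\{y\}$, on $[t_{0},T]$. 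By Ascoli--Arzel\`a on $[t_{0},t_{0}+n]$ and a diagonal extraction over $n\in\mathbb{N}$, any sequence in $\mathcal{F}(\Omega)$ has a subsequence converging in $C^{1}(J,\R)$, which is (iii). For (iv), let $q_{n}\to q$ in $\Omega$ and set $y_{n}=\mathcal{F}(q_{n})$; by (iii) every subsequence of $(y_{n})$ has a further subsequence $y_{n_{k}}\to\bar{y}$ in $C^{1}(J,\R)$ with $\bar{y}\in S_{1}$, and passing to the limit in the integral reformulation of \eqref{bvpoh} (uniform convergence of $q_{n_{k}}$ and $y_{n_{k}}$ on compacts, continuity of $g$) shows that $\bar{y}$ solves \eqref{bvpoh} with parameter $q$, so by the \emph{uniqueness} assumption $\bar{y}=\mathcal{F}(q)$. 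Since every subsequence admits a sub-subsequence with the same limit $\mathcal{F}(q)$, the whole sequence converges, giving continuity. Schauder--Tychonoff then yields a fixed point of $\mathcal{F}$, hence a solution of \eqref{bvpo} in $\Omega$.

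The delicate point, and the place where the hypotheses are genuinely used, is the continuity of $\mathcal{F}$ in step (iv): it relies on the \emph{uniqueness} of the solution of the linearized problem --- without it $\mathcal{F}$ would only be a (possibly discontinuous) selection of a multivalued map and the subsequence argument would collapse --- combined with the local compactness from (iii), which is precisely what permits extracting limits and passing them through the equation. A secondary technical care is that all compactness here is \emph{local} (the Fr\'echet topology on $C^{1}(J,\R)$), so Ascoli--Arzel\`a must be coupled with a diagonal argument rather than applied globally. As is typical for such linearization schemes, the substantive effort in applications lies not in this abstract step but in verifying its hypotheses --- the explicit construction of $g$, $\Omega$ and $S_{1}$ and the proof that \eqref{bvpoh} is uniquely solvable for every $q\in\Omega$ --- which in the present setting is accomplished through the comparison theory for principal and nonprincipal solutions of the auxiliary linear equations.
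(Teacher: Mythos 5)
The paper does not actually prove Proposition~\ref{tcfm}: it is recalled verbatim from Theorem~1 of Do\v{s}l\'a--Matucci \cite{DM}, which in turn rests on Theorem~1.3 of Cecchi--Furi--Marini \cite{Furi}. Your reconstruction is precisely the argument behind those cited results --- the solution operator $\mathcal{F}(q)=y$ of the linearized problem, invariance $\mathcal{F}(\Omega)\subseteq S_1\subseteq\Omega$, relative compactness via Ascoli--Arzel\`a plus diagonal extraction in the Fr\'echet space $C^1(J,\R)$, continuity via the uniqueness-plus-subsequence principle, and Schauder--Tychonoff --- and you correctly identify uniqueness of the linearized problem as the load-bearing hypothesis for continuity. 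So the route is the standard one and is sound in substance.

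One point deserves to be made explicit, because as written your step (iii) proves slightly less than it claims. To get equicontinuity of $\{y'\}$ on a compact $[t_0,T]$ you bound $(p\,y')'=-g(t,y,q)$; the values $y(t)$ are controlled because $S_1$ is bounded, but the values $q(t)$ range over $\{q(t):q\in\Omega,\ t\in[t_0,T]\}$, and the hypotheses of the proposition as stated do not assert that $\Omega$ is bounded (indeed, in the application in Section~3, $\Omega$ is not bounded in $C^1$). Hence ``continuity of $g$ together with these a priori bounds'' does not by itself give equiboundedness of $g(t,y(t),q(t))$ uniformly in $q\in\Omega$. The gap is harmless in practice --- in the application $g(t,y,q)=\frac{f(t,q)}{q}\,y$ depends on $q$ only through $q(t)$, and $\Omega$ is order-bounded ($w_0\le q\le 2u_0$), so the integrand is equibounded on compacts --- and the original formulation in \cite{Furi} carries the extra structure needed to make this step automatic. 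But in a self-contained proof of the proposition exactly as stated you should either add the (pointwise, local) boundedness of $\Omega$ as a standing assumption or observe explicitly where it comes from; otherwise the equi-Lipschitz claim for $\{p\,y'\}$, and with it the compactness of $\overline{\mathcal{F}(\Omega)}$, does not follow.
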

In view of this result, no topological properties of the fixed-point operator are needed to
be checked, since they are a direct consequence of \textit{a-priori} bounds for the solutions of the ``linearized'' problem \eqref{bvpoh}.

We state here the existence results in the form which will be used in the next section, addressing to \cite[Theorem 2]{DM} for the general result,  in case of a factored nonlinearity. For reader's convenience we provide a short proof, focusing only on those points which require some adjustments due to the more general nonlinearity. We point out that the present results are obtained  by using the Euler equation
\begin{equation}\label{e:E}
    t^2 y''+nt y' +\left(\frac{n-1}{2}\right)^2 y=0, \quad n>1,
\end{equation}
as a Sturm majorant of the linearized auxiliary equation (see also \cite[Corollary 3]{DM}), but any other linear equation having a positive decreasing solution can be used as a majorant equation, obtaining different conditions. The first result states sufficient conditions for the existence of a global positive solution of \eqref{Eq2}, bounded on $[R, +\infty)$.

\begin{thm}\label{thmmsup} Assume that \eqref{P}
--\eqref{b1-} hold,  and let
$$\displaystyle{M_j(d)=\sup_{v \in(0, d]} \frac{F_j(v)}{v}}, \quad j=1,2$$ where
$d>0$ is a fixed constant. If
\begin{equation}\label{cG-}
B_1^-\leq \frac{\log 2}{M_1(d) P},
\end{equation}
 and
\begin{equation}\label{cconf1}
p(t)\geq t^n, \quad b_2(t)\leq \frac{(n-1)^2}{4 M_2(d) } \, t^{n-2}, \quad \text{ for all } t \geq R,
\end{equation}
for some $n>1$,  then for every $u_0\in (0, d/2]$ the equation \eqref{Eq2}  has a solution $u$, satisfying $u(R)=u_0, \, u'(R)=0, \, 0<u(t)\leq 2 u_0$ for $t \geq R$.
\end{thm}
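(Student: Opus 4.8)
The plan is to apply Proposition~\ref{tcfm} on $J=[R,\infty)$ with the slope and asymptotic data prescribed by~\eqref{C:Eq2}. First I would set up the linearization: given $q\in\Omega$ (a suitable closed convex subset of $C^1(J,\R)$ consisting of functions $v$ with $0\le v(t)\le 2u_0$, $v(R)=u_0$), define $g(t,y,q)=\dfrac{f(t,q)}{q}\,y$ when $q\neq 0$ (and $g(t,c,c)=f(t,c)$, extended continuously using $f(t,0)=0$ and~\eqref{FF2}), so that the linearized equation~\eqref{bvpoh} reads $(p(t)y')'+a_q(t)y=0$ with $0\le a_q(t)\le b_2(t)M_2(2u_0)$ on the positive side and $a_q(t)\ge -b_1^-(t)M_1(2u_0)$ on the negative side, using~\eqref{FF1} and the definition of $M_j$. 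The set $S_1$ would be the solutions $y$ of this linear equation with $y(R)=u_0$, $y'(R)=0$, $0<y(t)\le 2u_0$, $\lim_{t\to\infty}y(t)=0$; I must show this problem is uniquely solvable for each $q\in\Omega$ and that $S_1\subset\Omega$.

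The heart of the argument is the two-sided a-priori bound $0<y(t)\le 2u_0$. For the upper bound I would use the Euler equation~\eqref{e:E} as a Sturm majorant: condition~\eqref{cconf1}, $p(t)\ge t^n$ and $b_2(t)\le\frac{(n-1)^2}{4M_2(d)}t^{n-2}$, guarantees that $(p(t)y')'+a_q(t)y=0$ is a Sturm minorant of~\eqref{e:E}, which has the positive decreasing (principal) solution $t^{-(n-1)/2}$; hence the principal solution of the linearized equation is positive and decreasing on $[R,\infty)$, and the solution with data $y(R)=u_0$, $y'(R)=0$ is nonincreasing, so $y(t)\le u_0\le 2u_0$ and $y$ stays positive with $y(+\infty)$ finite. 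For the lower bound one writes $p(t)y'(t)=-\int_R^t a_q(s)y(s)\,ds$, splits $a_q=a_q^+-a_q^-$, and controls the negative contribution: $\int_R^\infty a_q^-(s)\,ds\le M_1(d)B_1^-$, and then $y(t)\ge u_0\exp\!\big(-M_1(d)B_1^- P\big)\ge u_0/2$ by~\eqref{cG-} (this is exactly where $B_1^-\le\frac{\log 2}{M_1(d)P}$ and $P<\infty$ enter, via a Gronwall-type estimate using $\int_R^\infty p(t)^{-1}\,dt=P$). Finally, $\lim_{t\to\infty}y(t)=0$ follows because $y$ is the positive decreasing principal solution (up to the positive constant $u_0$) of an equation Sturm-minorized by~\eqref{e:E}, whose principal solution tends to $0$; alternatively one argues that a positive limit $\ell>0$ would force $\int^\infty a_q^+ <\infty$ against~\eqref{b1+} combined with~\eqref{P}, contradicting the standing hypotheses.

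With these bounds in hand, unique solvability of~\eqref{bvpoh} is the statement that the linear equation with the fixed intermediate slope/value conditions has exactly one solution of principal type in the prescribed tube; this is standard once one knows the principal solution is positive and decreasing (the value $u_0$ at $R$ with zero slope pins it down, using that $\beta$ and the structure force $y'(R)=0$ to select the principal solution), and $S_1\subset\Omega$ is immediate from $0<y\le 2u_0$ and $y(R)=u_0$. The continuity of $g$ in $(t,y,q)$ and of the data in $q$, hence closedness of $S_1$, is routine. Then Proposition~\ref{tcfm} yields a solution $u\in\Omega$ of~\eqref{Eq2} with $u(R)=u_0$, $u'(R)=0$, $0<u(t)\le 2u_0$, as claimed.

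The main obstacle I anticipate is making the a-priori lower bound $y(t)\ge u_0/2$ fully rigorous while simultaneously ensuring the solution does not vanish in finite time: one must run the positivity argument and the Gronwall estimate together on a maximal subinterval where $y>0$ and then bootstrap to all of $[R,\infty)$, carefully invoking~\eqref{cG-} with $d$ replaced by the a-priori ceiling $2u_0\le d$ so that $M_j(2u_0)\le M_j(d)$. The Sturm comparison with the Euler equation~\eqref{e:E} must also be applied in the correct (principal vs.\ nonprincipal) direction; this is precisely the delicate point handled in~\cite{DM}, and here it only requires the mild adjustment of replacing the factored bound $b(t)F(v)$ by the two-sided bound~\eqref{FF1}.
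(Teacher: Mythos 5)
Your overall architecture (linearize via $g(t,y,q)=\frac{f(t,q)}{q}y$, bound the coefficient by $-M_1(d)b_1^-(t)\le a_q(t)\le M_2(d)b_2(t)$ using \eqref{FF1}, compare with the Euler equation \eqref{e:E}, and invoke Proposition~\ref{tcfm}) is the same as the paper's, but the two a-priori bounds are argued in essentially the reverse of the correct way, and both arguments as written fail. For the upper bound you claim that the Sturm comparison with \eqref{e:E} forces the solution with $y(R)=u_0$, $y'(R)=0$ to be nonincreasing, hence $y\le u_0$. This is false in general: the coefficient $a_q$ is negative wherever $b_1<0$ (the whole point of this section is that $f$ may change sign on $[R,\infty)$), so $(py')'(R)=-a_q(R)u_0$ can be positive, $y'$ becomes positive, and $y$ rises above $u_0$. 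The correct upper bound is $2u_0$, not $u_0$, and it is obtained by double integration and Gronwall's lemma: $U(t)\le u_0+M_1(d)B_1^-\int_R^t U(s)/p(s)\,ds$ with $U(t)=\max_{[R,t]}y$, so that \eqref{cG-} gives $U\le u_0e^{M_1(d)B_1^-P}\le 2u_0$. In other words, $B_1^-$ and \eqref{cG-} control how much the solution can \emph{increase}; you instead spend this estimate on the lower bound.

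Your lower bound $y(t)\ge u_0\exp(-M_1(d)B_1^-P)\ge u_0/2$ is not correct: the decrease of $y$ is driven by the \emph{positive} part of $a_q$, which is only bounded by $M_2(d)b_2(t)$, and $\int_R^\infty b_1^+(t)\,dt$ may well be infinite (it \emph{must} be under \eqref{b1+}). Indeed a uniform bound $y\ge u_0/2$ would contradict Theorem~\ref{at:2}, where under the additional hypothesis \eqref{b1+} the same solution tends to zero. Positivity cannot be obtained by a Gronwall-type estimate at all; it is exactly the delicate step requiring the principal/nonprincipal solution machinery: one shows, as in \cite[Lemma 3]{DM}, that the solution of the linearized equation \eqref{ell} with $u(R)=u_0$, $u'(R)=0$ satisfies $u(t)\ge w_0(t)>0$, where $w_0$ is the principal solution of the Sturm minorant \eqref{eml}, nonoscillation being supplied by the Euler majorant \eqref{e:E} via \eqref{cconf1}. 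Finally, you should not put $\lim_{t\to\infty}y(t)=0$ into the definition of $S_1$: Theorem~\ref{thmmsup} does not assert decay to zero, your justification of it appeals to \eqref{b1+} which is not among the hypotheses here, and requiring it would break the unique solvability of \eqref{bvpoh} that Proposition~\ref{tcfm} demands. The decay statement is the content of Theorem~\ref{at:2} and needs the extra assumption \eqref{b1+}.
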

\begin{proof} The result follows from \cite[Theorem 2 and Corollary 3]{DM}, with some technical adjustments due to the actual general form of the nonlinearity. Indeed, it is sufficient to observe that, for every continuous function $q:[R, \infty) \to (0, d]$ fixed, the equations
\begin{equation}\label{eml}
\left(  p(t)w^{\prime}(t)\right)^{\prime}-M_1(d) b_1^-(t)\, w(t)=0
\end{equation}
is a Sturm minorant of the linearized equation
\begin{equation}\label{ell}
\left(  p(t)u^{\prime}(t)\right)^{\prime}+\dfrac{f(t, q(t))}{q(t)}\, u(t)=0,
\end{equation}
and \eqref{e:E} is a Sturm majorant, due to \eqref{cconf1} (see for instance \cite{H}). Since \eqref{e:E} is nonoscillatory, and has the  solution $y(t)=t^{-(n-1)/2}$, which is positive decreasing on $[R, +\infty)$, from \cite[Lemma 3]{DM} the solution of \eqref{ell} satisfying the initial conditions $u(R)=u_0$, $u'(R)=0$ exists and is positive on $[R, +\infty)$, since it satisfies $u(t)\geq w_0(t)$ for all $t \geq R$, where $w_0$ is the principal solution of \eqref{eml}. Further, double integration of \eqref{ell} on $[R, t]$, $t>R$, gives
\[
U(t) \leq u_0+M_1(d) B_1^- \int_R^t \frac{U(s)}{p(s)} \, ds, \quad \text{ where }U(t)=\max_{s \in [R,t]} u(s),
\]
and the Gronwall lemma, together with \eqref{cG-}, gives the upper bound $u(t)\leq U(t)\leq 2u_0$. Thus, put
\begin{align*}
S&=\{q\in C^{1}[t_{0},\infty):\,q(R)=u_0,\ q^{\prime}(R)=0,\ q(t)>0\ \text{for } t\geq R\},
\\
\Omega&=\left\{  q\in C^{1}[1,\infty):\, q(R)=u_0,\,q^{\prime}(R)=0, \, w_{0}(t)\leq q(t)\leq2u_0\right\}.
\end{align*}
We have $S\cap\Omega=\Omega$ and the set $S_{1}=\overline{\mathcal{T}(\Omega)}$, where $\mathcal{T}$ is the operator which maps every $q \in \Omega$ into the unique solution of \eqref{ell} satisfying the initial conditions $u(R)=u_0$, $u'(R)=0$, satisfies $S_{1}\subset S\cap\Omega=\Omega$ and is bounded in $C^1[R, +\infty)$ (for the detailed proof see \cite[Theorem 2]{DM}). Then Proposition \ref{tcfm} can be applied, and the existence of a solution of \eqref{Eq2} in the set $S\cap \Omega$ is proved.
\end{proof}

\begin{rem} Clearly, the result in Theorem \ref{thmmsup} holds also if we allow a different upper bound for the solution. More precisely, if we look for a solution satisfying $0<u(t)\leq k \,u_0$ with $k>1$, then it is sufficient to put  $\log k$ instead of $\log 2$ in \eqref{cG-}, and we get the existence of global positive solutions of the Cauchy problem associated with \eqref{Eq2}, for every $u_0$ such that $0<k u_0\leq d$.
\end{rem}

\begin{rem} Since $M_2(d)$ is nondecreasing, condition \eqref{cG-} can be seen as an upper bound for the values of $u_0$ for which \eqref{Eq2} has a global bounded solution. For instance, if $F_1(v)=v^\beta$, $\beta>1$, then $M_1(d)=d^{\beta-1}$ and \eqref{cG-} can be read as
\[
2u_0\leq \left(\frac{\log 2}{P B_1^-}\right)^{\frac{1}{\beta-1}} \quad \text{ if } B_1^-\neq 0,
\]
while, if $F_1(v)=v$, then $M_1(d)=1$ for all $d>0$, and \eqref{Eq2}, \eqref{C:Eq2} has solution for all $u_0>0$, provided \eqref{cconf1} is satisfied.
\end{rem}

If in the statement of Theorem \ref{thmmsup}  we assume in addition the condition \eqref{b1+}, we get the existence of a solution of the BVP \eqref{Eq2}, \eqref{C:Eq2}.
\begin{thm}\label{at:2}
Assume that \eqref{P}--\eqref{b1+} hold, and that $d>0$ exists such that
\eqref{cG-}
 and
\eqref{cconf1}
are satisfied for some $n>1$.  Then, for every $u_0\in (0, d/2]$, the BVP \eqref{Eq2}, \eqref{C:Eq2}  has at least a solution $u$, satisfying
\[0<u(t)\leq 2u_0\text{ for }t\in [R,\infty),\ u^{\prime}(t)<0\text{ for
large }t.\]
\end{thm}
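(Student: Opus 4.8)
The plan is to build on Theorem~\ref{thmmsup}, which under the stated hypotheses already provides, for every $u_0\in(0,d/2]$, a global solution $u$ of \eqref{Eq2} with $u(R)=u_0$, $u'(R)=0$ and $0<u(t)\le 2u_0$ on $[R,\infty)$. What remains is to show that the extra assumption \eqref{b1+} forces such a solution (or at least one solution obtained by this construction) to satisfy $\lim_{t\to\infty}u(t)=0$ and $u'(t)<0$ for large $t$, i.e.\ to be a \emph{principal-type} decaying solution rather than one that stays bounded away from zero. So the first step is to re-run the Schauder linearization of Theorem~\ref{thmmsup} but with a smaller target set $\Omega$ that incorporates the asymptotic condition, or alternatively to analyse directly the asymptotic behaviour of the solution already produced.

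First I would record the sign structure of $u$ and $p u'$. Since $u>0$ on $[R,\infty)$, on the set where $b_1\ge 0$ we have $f(t,u(t))\ge b_1(t)F_1(u(t))\ge 0$, so $(pu')'=-f(t,u)$ is eventually $\le 0$ once we are past the region controlled by $b_1^-$; more carefully, $(p(t)u'(t))' = -f(t,u(t))$ and, integrating from $R$, $p(t)u'(t) = -\int_R^t f(s,u(s))\,ds$. Using \eqref{FF1}, $f(s,u(s))\ge b_1(s)F_1(u(s)) \ge b_1^+(s)F_1(u(s)) - b_1^-(s)F_1(u(s))$, and since $u(s)\le 2u_0\le d$ we have $F_1(u(s))\le M_1(d)u(s)\le 2M_1(d)u_0$, while from below $F_1(u(s))\ge F_1(w_0(s))$ where $w_0$ is the principal solution of \eqref{eml} appearing in the proof of Theorem~\ref{thmmsup}. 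The key point is that $\int_R^\infty b_1^-(s)\,ds<\infty$ by \eqref{b1-}, whereas $\int_R^\infty b_1^+(s)\,ds=+\infty$ (this is exactly the consequence of \eqref{P} and \eqref{b1+} noted in the introduction), and $w_0$ is bounded below on compact sets and does not decay too fast — in fact one shows $u(t)\ge w_0(t)>0$ and that $\int_R^\infty b_1^+(s)F_1(u(s))\,ds$ diverges. Hence $p(t)u'(t)\to -\ell$ for some $\ell\in(0,+\infty]$, which gives $u'(t)<0$ for large $t$ and, by \eqref{P}, $u(t)$ converges to a finite limit $L\ge 0$ as $t\to\infty$.

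Next I would rule out $L>0$. If $L>0$ then $u(t)\ge L>0$ for all large $t$, so $F_1(u(t))\ge F_1(L)>0$ eventually, and therefore $\int_R^\infty f(s,u(s))\,ds \ge \int^\infty b_1^+(s)F_1(u(s))\,ds - M_1(d)\,2u_0\,B_1^- = +\infty$ using \eqref{b1+} (in its integrated-against-$1/p$ form one gets the stronger divergence $\int_R^\infty \frac1{p(t)}\int_R^t b_1^+(s)F_1(u(s))\,ds\,dt=+\infty$). The divergence of $\int_R^\infty f(s,u(s))\,ds$ means $p(t)u'(t)\to-\infty$; but then integrating $u'(t)=\frac{1}{p(t)}\cdot p(t)u'(t)$ and using \eqref{b1+} to see that $\int_R^\infty \frac{1}{p(t)}\bigl(\int_R^t f(s,u(s))\,ds\bigr)dt=+\infty$, we get $u(t)\to-\infty$, contradicting $u>0$. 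Hence $L=0$, i.e.\ $\lim_{t\to\infty}u(t)=0$, and combined with the previous step $u'(t)<0$ for large $t$, so $u$ solves \eqref{Eq2}, \eqref{C:Eq2} with the asserted bounds. This is essentially the argument of \cite[Theorem 2]{DM} adapted to the general nonlinearity through \eqref{FF1}, \eqref{FF2}, exactly as in the proof of Theorem~\ref{thmmsup}.

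The main obstacle I anticipate is the lower bound: one needs that $u$ does not decay so fast that $\int^\infty b_1^+(s)F_1(u(s))\,ds$ could converge after all — in principle $u$ could tend to $0$ quickly enough that $F_1(u(s))$ is too small to make the $b_1^+$ integral diverge, which would break the contradiction in the final step. This is precisely why the comparison with the principal solution $w_0$ of the minorant equation \eqref{eml} is needed: $u(t)\ge w_0(t)$, and $w_0$ is a principal solution of an equation with an \emph{integrable} coefficient $M_1(d)b_1^-$, so $w_0$ behaves asymptotically like $c_1 + c_2\int_R^t \frac{ds}{p(s)}$ up to bounded factors and in particular $w_0(t)\ge c>0$; then $F_1(u(t))\ge F_1(w_0(t))\ge F_1(c)>0$, and the $b_1^+$-integral diverges by \eqref{b1+}. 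Making this lower bound on $w_0$ precise — via the variation-of-constants / Gronwall estimate already invoked in Theorem~\ref{thmmsup} and \cite[Lemma 3]{DM} — is the one place where genuine care is required; the rest is bookkeeping with the sign of $pu'$ and two integrations.
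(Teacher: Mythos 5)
Your overall strategy coincides with the paper's: start from the bounded positive solution produced by Theorem~\ref{thmmsup} (the paper simply defers to the second part of the proof of \cite[Theorem 2]{DM}), then use \eqref{b1+} to force eventual monotonicity and decay to zero. Your final step ruling out a positive limit $L>0$ is sound. However, there is a genuine gap in the step that must come first, namely showing that $u$ is eventually decreasing (equivalently that $p(t)u'(t)=-\int_R^t f(s,u(s))\,ds$ eventually becomes and stays negative). You reduce this to the divergence of $\int^\infty b_1^+(s)F_1(u(s))\,ds$, and to get that you claim $u(t)\ge w_0(t)\ge c>0$, where $w_0$ is the principal solution of \eqref{eml}. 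That lower bound is false: when $\int_R^\infty dt/p(t)<\infty$ the principal solution of a disconjugate equation $(pw')'-\lambda(t)w=0$, $\lambda\ge0$, is characterized by $\int^\infty dt/(p(t)w_0^2(t))=+\infty$, which is incompatible with $w_0\ge c>0$; indeed in the model case $b_1^-\equiv0$ one has $w_0(t)=\int_t^\infty ds/p(s)\to0$. So the principal solution is precisely the solution that decays to zero (it is the nonprincipal solutions that stay bounded away from zero), the comparison $u\ge w_0$ gives no uniform positive lower bound, and a priori $u$ could be small exactly where $b_1^+$ is large, so the divergence of $\int b_1^+F_1(u)$ cannot be concluded this way.

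The paper's device that closes this gap is the Riccati-type inequality it displays: from \eqref{FF1}, $\dfrac{(p(t)u'(t))'}{F_1(u(t))}+b_1(t)\le0$ for any positive solution. Since $F_1$ is $C^1$ and nondecreasing,
\[
\frac{d}{dt}\Bigl(\frac{p(t)u'(t)}{F_1(u(t))}\Bigr)=\frac{(p(t)u'(t))'}{F_1(u(t))}-\frac{p(t)(u'(t))^2F_1'(u(t))}{F_1^2(u(t))}\le -b_1(t),
\]
and integrating from $R$ (where $u'(R)=0$) gives $\dfrac{p(t)u'(t)}{F_1(u(t))}\le B_1^- -\int_R^t b_1^+(s)\,ds\to-\infty$, using \eqref{b1-} and the divergence of $\int_R^\infty b_1^+$ implied by \eqref{P} and \eqref{b1+}. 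This yields $u'(t)<0$ for large $t$ with no lower bound on $u$ whatsoever; $u$ then converges to some $L\ge0$, and your contradiction argument (integrating twice against $1/p$ and invoking \eqref{b1+}) correctly excludes $L>0$. With that substitution your proof matches the intended one.
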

\begin{proof} The proof is analogous to the second part of the proof of \cite[Theorem 2]{DM}, with obvious modifications due to the more general form of the  nonlinear term here considered. In particular notice that \eqref{Eq2} with conditions \eqref{FF1} gives the inequality
\[
\frac{(p(t) u'(t))'}{F_1(u(t))}+b_1(t) \leq 0
\]
for every  positive solution $u$ of \eqref{Eq2} and all $t\geq R$. Thus the arguments in the proof of \cite[Theorem 2]{DM} apply also to the present case.
\end{proof}

We conclude this Section pointing out that the case $b_1(t) \geq 0$ for $t \geq R$ is included in the previous results, and in this case Theorems \ref{thmmsup}, \ref{at:2} have a more simple statement. Indeed, $B_1^-=0$, and therefore \eqref{b1-} and \eqref{cG-} are trivially satisfied. Further, every solution of \eqref{Eq2} is nonincreasing on the whole half-line.

\section{The main result}
Combining  Theorems \ref{thmmsol1} and \ref{thmmsup} or \ref{thmmsol1} and \ref{at:2}, we obtain an existence result for one or more solutions of the BVP \eqref{Eq2} and \eqref{C:Eq2}, respectively. We limit ourself to state results for the existence of one or two solutions, for sake of simplicity. Clearly, as pointed out in Section 2, adding more conditions, with similar arguments it is possible to obtain sufficient conditions for the existence of  three solutions (see Theorem \ref{thmmsol1}) or more solutions.

\begin{thm}\label{t:main2}
Suppose that \eqref{P}--\eqref{b1-} are satisfied, and that there exist  $\rho _{1},\rho _{2}\in (0, +\infty )$, with $\rho
_{1}<\rho _{2}$, such that either ($S_1$) or ($S_2$) holds. If $n>1$ exists, such that \eqref{cG-}, \eqref{cconf1} are satisfied with $d=2\rho_2$,
 then the BVP \eqref{BVP-HL-f} has at least one solution $u_1$, with $u_1(t)\leq 2\rho_2$ for all $t \geq 0$.

If, in addition, there exists $\rho _{3}\in (0, +\infty )$, with $\rho _{2}<\rho _{3}$, such that either ($S_3$) or ($S_4$)
 holds, and \eqref{cG-}, \eqref{cconf1} are satisfied with $d=2\rho_3$,
then the BVP~\eqref{BVP-HL-f} has an additional solution $u_2$, with $u_2(t) \leq 2\rho_3$.
\end{thm}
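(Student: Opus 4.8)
The plan is to glue together the compact-interval solution from Section~2 with the half-line solution from Section~3 along the junction point $t=R$, exploiting that both auxiliary problems impose the \emph{same} condition $u'(R)=0$. First I would invoke Theorem~\ref{thmmsol1}: under $(S_1)$ or $(S_2)$ the BVP~\eqref{Aux-BVP-comp} on $[0,R]$ has a non-negative solution $u_1^{\mathrm{left}}$ with $\rho_1<u_1^{\mathrm{left}}(R)<\rho_2$, and from the monotonicity argument in the proof of that theorem, $u_1^{\mathrm{left}}(R)=\|u_1^{\mathrm{left}}\|$, so $u_1^{\mathrm{left}}(t)\le\rho_2$ on $[0,R]$. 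Set $u_0:=u_1^{\mathrm{left}}(R)\in(\rho_1,\rho_2)$.

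Next I would apply Theorem~\ref{thmmsup} (for the unbounded-solution conclusion giving~\eqref{BVP-HL-f}) on $[R,\infty)$ with this particular value $u_0$. The hypotheses~\eqref{cG-}, \eqref{cconf1} are assumed with $d=2\rho_2$, and since $u_0<\rho_2$ we have $u_0\in(0,d/2]$, so Theorem~\ref{thmmsup} supplies a solution $u_1^{\mathrm{right}}$ of~\eqref{Eq2} on $[R,\infty)$ with $u_1^{\mathrm{right}}(R)=u_0$, $(u_1^{\mathrm{right}})'(R)=0$, and $0<u_1^{\mathrm{right}}(t)\le 2u_0\le 2\rho_2$ for $t\ge R$. (If instead one wants the sharper BVP~\eqref{Eq2},\eqref{C:Eq2} with $u(+\infty)=0$, one uses Theorem~\ref{at:2} under the extra hypothesis~\eqref{b1+}; the gluing argument is identical.)

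Then I would define
\[
u_1(t):=\begin{cases} u_1^{\mathrm{left}}(t), & t\in[0,R],\\[2pt] u_1^{\mathrm{right}}(t), & t\ge R,\end{cases}
\]
and check that $u_1$ solves~\eqref{BVP-HL-f}. Continuity at $t=R$ holds because both pieces equal $u_0$ there. The key regularity point is $C^1$-matching of the quasiderivative $pu'$: the left piece has $(u_1^{\mathrm{left}})'(R)=0$ because it satisfies the BC $u'(R)=0$ in~\eqref{Aux-BVP-comp}, and the right piece has $(u_1^{\mathrm{right}})'(R)=0$ by the first initial condition in~\eqref{C:Eq2}; hence $p u_1'$ is continuous at $R$ with value $0$. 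Consequently $(p u_1')'$ exists on all of $[0,\infty)$ and equals $-f(t,u_1(t))$ on each side, so the differential equation holds throughout. The boundary condition $\alpha u_1(0)-\beta u_1'(0)=B[u_1]$ is inherited from the left piece — here I must note that $B$ has support in $[0,R]$, so $B[u_1]$ depends only on $u_1|_{[0,R]}=u_1^{\mathrm{left}}$, and equals $B[u_1^{\mathrm{left}}]$ (extended suitably to $K$); one should remark that the functional $H$ used in Section~2 is built from $B$ so this is consistent. Finally $u_1\ge 0$ and bounded on $[0,\infty)$ since $u_1^{\mathrm{left}}\ge 0$ and $0<u_1^{\mathrm{right}}\le 2\rho_2$, and the global bound $u_1(t)\le 2\rho_2$ follows from $u_1^{\mathrm{left}}(t)\le\rho_2\le 2\rho_2$ on $[0,R]$ together with the bound on the right piece.

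For the second solution, under the additional hypotheses $(S_3)$ or $(S_4)$ Theorem~\ref{thmmsol1} yields a second solution $u_2^{\mathrm{left}}$ on $[0,R]$ with $\rho_2<u_2^{\mathrm{left}}(R)<\rho_3$ and again $u_2^{\mathrm{left}}(R)=\|u_2^{\mathrm{left}}\|$; setting $u_0':=u_2^{\mathrm{left}}(R)\in(0,\rho_3]$ and using~\eqref{cG-}, \eqref{cconf1} with $d=2\rho_3$ one gets, exactly as above, a glued solution $u_2$ with $u_2(t)\le 2\rho_3$; it is distinct from $u_1$ because $u_2(R)=u_0'>\rho_2>u_0=u_1(R)$. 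The main obstacle — really the only nontrivial point — is the bookkeeping around the functional $B$ and $H$: one must make sure that ``$B$ has support in $[0,R]$'' is used precisely enough that $B[u_1]$ is well defined from the restriction $u_1|_{[0,R]}$ and coincides with the value $H[u_1^{\mathrm{left}}]$ (up to the normalization by $\alpha$) that was fed into the Section~2 operator $T$; everything else is the routine $C^1$-pasting lemma at $t=R$ enabled by the common slope condition $u'(R)=0$.
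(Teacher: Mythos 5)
Your proposal is correct and follows exactly the route the paper intends: the paper gives no written proof of Theorem~\ref{t:main2} beyond the phrase ``combining Theorems~\ref{thmmsol1} and~\ref{thmmsup}'', and your gluing at $t=R$ via the common slope condition $u'(R)=0$, with $u_0=u_1^{\mathrm{left}}(R)<\rho_2=d/2$ feeding into Theorem~\ref{thmmsup}, is precisely that combination. Your attention to the support of $B$ and to the distinctness of the two solutions via $\rho_1<u_1(R)<\rho_2<u_2(R)<\rho_3$ matches the remark the authors place immediately after the statement.
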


Notice that, from Theorem \ref{thmmsol1}, the solutions $u_1, u_2$ satisfy $\rho_1<u_1(R)<\rho_2<u_2(R)<\rho_3$ and therefore they are distinct solutions. Further, since solutions on $[0,R]$ are increasing, then $u_1(t)<\rho_2, \, u_2(t)< \rho_3,$ for all $t \in [0,R]$.

\medskip
In case also assumption \eqref{b1+} is satisfied, from the above Theorem we obtain sufficient conditions for the existence of solutions of the BVP \eqref{BVP-HL-intro}.

\begin{thm}\label{t:main1}
Suppose that \eqref{P}--\eqref{b1+} are satisfied, and that there exist  $\rho _{1},\rho _{2}\in (0, +\infty )$, with $\rho
_{1}<\rho _{2}$, such that either ($S_1$) or ($S_2$) holds. If $n>1$ exists, such that \eqref{cG-}, \eqref{cconf1} are satisfied with $d=2\rho_2$,
 then the BVP \eqref{BVP-HL-intro} has at least one solution $u_1$, with $u_1(t)\leq 2\rho_2$ for all $t \geq 0$.

If, in addition, there exists $\rho _{3}\in (0, +\infty )$, with $\rho _{2}<\rho _{3}$, such that either ($S_3$) or ($S_4$)
 holds, and \eqref{cG-}, \eqref{cconf1} are satisfied with $d=2\rho_3$,
then the BVP~\eqref{BVP-HL-intro} has an additional solution $u_2$, with $u_2(t) \leq 2\rho_3$.
\end{thm}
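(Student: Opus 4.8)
The plan is to obtain the solution of the full BVP~\eqref{BVP-HL-intro} by gluing a solution of the compact-interval problem~\eqref{Aux-BVP-comp} with a solution of the half-line problem~\eqref{Eq2}, \eqref{C:Eq2}, exploiting that both auxiliary problems share the slope condition $u'(R)=0$ at the junction. First I would invoke Theorem~\ref{thmmsol1}: under~($S_1$) or~($S_2$) there is a non-negative solution $v_1$ of~\eqref{Aux-BVP-comp} on $[0,R]$ with $\rho_1<v_1(R)<\rho_2$, and since the argument in the proof of Theorem~\ref{thmmsol1} shows $v_1'\geq 0$ on $[0,R]$, we have $v_1(t)\leq v_1(R)<\rho_2$ for all $t\in[0,R]$, and in particular $u_0:=v_1(R)\in(0,\rho_2)\subset(0,d/2]$ with $d=2\rho_2$. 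Then I would apply Theorem~\ref{at:2} with this value of $u_0$: since~\eqref{P}--\eqref{b1+} hold and~\eqref{cG-}, \eqref{cconf1} are satisfied with $d=2\rho_2$, the BVP~\eqref{Eq2}, \eqref{C:Eq2} has a solution $w_1$ on $[R,\infty)$ with $w_1(R)=u_0$, $w_1'(R)=0$, $0<w_1(t)\leq 2u_0\leq 2\rho_2$ and $w_1(t)\to 0$ as $t\to\infty$.

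Next I would define $u_1:[0,\infty)\to\R$ by $u_1=v_1$ on $[0,R]$ and $u_1=w_1$ on $[R,\infty)$, and check that this glued function is a genuine solution of~\eqref{BVP-HL-intro}. The matching of values is automatic since $v_1(R)=u_0=w_1(R)$; the matching of the one-sided derivatives of $pu'$ — equivalently the fact that $u_1\in C^1$ near $R$ and $(pu_1')'$ is continuous there — follows because both pieces satisfy $u'(R)=0$, so $p(R)v_1'(R)=0=p(R)w_1'(R)$, and because $f(t,u_1(t))$ is continuous across $t=R$; hence $(pu_1')'=-f(\cdot,u_1)$ holds in the classical sense on all of $[0,\infty)$. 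The boundary condition $\alpha u_1(0)-\beta u_1'(0)=B[u_1]$ needs a word: $B$ has support in $[0,R]$, so $B[u_1]=B[v_1]$ and the condition is inherited from the compact-interval problem — here I would make explicit that "support in $[0,R]$" means $B[u]$ depends only on $u|_{[0,R]}$, and note that $H$ was related to $B$ via the formulation~\eqref{phamm}. Finally $u_1(+\infty)=w_1(+\infty)=0$ from~\eqref{C:Eq2}, and $u_1\geq 0$ everywhere (non-negative on $[0,R]$, strictly positive on $[R,\infty)$). The bound $u_1(t)\leq 2\rho_2$ for all $t\geq 0$ follows from $v_1(t)<\rho_2<2\rho_2$ on $[0,R]$ and $w_1(t)\leq 2\rho_2$ on $[R,\infty)$.

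For the second, multiplicity part, I would repeat the construction: under~($S_3$) or~($S_4$) Theorem~\ref{thmmsol1} yields a second solution $v_2$ of~\eqref{Aux-BVP-comp} with $\rho_2<v_2(R)<\rho_3$, again increasing on $[0,R]$, so $u_0':=v_2(R)\in(0,\rho_3)\subset(0,d/2]$ with $d=2\rho_3$; Theorem~\ref{at:2} with $d=2\rho_3$ then gives $w_2$ on $[R,\infty)$ with $w_2(R)=u_0'$, $0<w_2(t)\leq 2\rho_3$, $w_2(+\infty)=0$, and gluing produces $u_2$ solving~\eqref{BVP-HL-intro} with $u_2(t)\leq 2\rho_3$. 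The two solutions are distinct because $u_1(R)=v_1(R)<\rho_2<v_2(R)=u_2(R)$, as already remarked after the statement of Theorem~\ref{t:main2}. I expect the only genuinely delicate point to be the $C^1$-gluing and the verification that the glued function solves the equation across the junction point $t=R$ in the required sense; this is exactly where the common condition $u'(R)=0$ does the work, and it is the simplification over the phase-plane gluing used in the cited literature, so the argument is short once one is careful that the functional $B$ sees only the restriction to $[0,R]$ and that $f(\cdot,u_1(\cdot))$ is continuous at $R$. Everything else is a direct citation of Theorems~\ref{thmmsol1} and~\ref{at:2}.
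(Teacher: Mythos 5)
Your proposal is correct and follows exactly the route the paper intends: the paper gives no written proof of Theorem~\ref{t:main1}, stating only that it follows by ``combining'' Theorems~\ref{thmmsol1} and~\ref{at:2}, and your gluing of the compact-interval solution with the half-line solution at $t=R$ via the shared condition $u'(R)=0$ (together with the observation that $u_0=v_1(R)<\rho_2=d/2$ and that $B$ sees only the restriction to $[0,R]$) is precisely that combination, carried out with appropriate care at the junction point.
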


We conclude this Section with the following illustration of our results.

\begin{ex}
Let us consider the BVP
\begin{equation}\label{BVP-ex}
\begin{cases}
(p(t)u'(t))'+b(t)u^2(t)=0,\ t\geq 0,\\
u(0)=B[u],\ u(+\infty)=0,
\end{cases}
\end{equation}
where
$$
p(t)=
\begin{cases}
1, &0\leq t\leq 1,\\
t^{n},& t >1,
\end{cases}
\quad
b(t)=
\begin{cases}
1, &0\leq t\leq 1,\\
\sin^+(\frac{\pi}{2}t)- \dfrac{\mu}{t^2}\sin^-(\frac{\pi}{2}t), & t> 1,
\end{cases}
$$
for some $n>1$ and $\mu>0$, and
\[
B[u]=\frac{1}{2}\sqrt{\int_0^1u(t)\, dt}.\]
The definition of the operator $B$  leads to the natural choice $[0,R]=[0,1]$. By direct computation
one has $m=2$. The choice of  $[a,b]=[1/2,1]$ leads to $c=1/2$ and $M=4$. Furthermore note that $\underline{f}_{\rho}=\frac{1}{4}\rho^2$, $\overline{f}_{\rho}=\rho^2$ and
$$
\frac{1}{2}\sqrt{\rho}\geq B[u]\geq \frac{1}{2}\sqrt{\int_{1/2}^1 u(t)\, dt}\geq \frac{1}{4}\sqrt{\rho},\ \text{for every}\  u\in \partial K_\rho.
$$
Observe that the inequalities
\begin{align*}
 \frac{1}{4}\underline{f}_{\rho_1}+\underline{H}_{\rho_1}[u]&\geq  \frac{1}{4}\bigl(\frac{1}{4}{\rho_1}^2+\sqrt{\rho_1}\bigr) >\rho_1,\\
 \frac{1}{2} \overline{f}_{\rho_2} + \overline{H}_{\rho_2}&\leq  \frac{1}{2} \bigl({\rho_2}^2 +\sqrt{\rho_2}\bigr) < \rho_2,\\
 \frac{1}{4}\underline{f}_{\rho_3}+\underline{H}_{\rho_3}[u]&\geq  \frac{1}{4}\bigl(\frac{1}{4}{\rho_3}^2+\sqrt{\rho_3}\bigr) >\rho_3.
\end{align*}
are satisfied for $\rho_1=1/20$, $\rho_2=3/4$ and $\rho_3=15$. Thus ($S_1$) and ($S_3$) hold for $\rho_1=1/20$, $\rho_2=3/4$ and $\rho_3=15$. Note that \eqref{P}--\eqref{b1-} are satisfied, with $P=1/(n-1)$, $F_1(v)=F_2(v)=v^2$, $b_1(t)=b(t), \, b_2(t)=1$, and $B_1^-< \mu$ holds. Straightforward calculations show that also \eqref{b1+} is satisfied. Since $M_1(d)=M_2(d)=d$, applying Theorem \ref{t:main1} we get the following result:
\begin{itemize}
  \item If $(n-1)^2\geq 6$ then \eqref{BVP-ex} has at least a solution $u_1$, with $0 \leq u_1(t)\leq 3/2$, for every $\mu \leq 2(n-1) \log 2/3$.
  \item If $(n-1)^2\geq 120$ then \eqref{BVP-ex} has at least two distinct solution $u_1, \, u_2$, with $0 \leq u_1(t)\leq 3/2$, $0 \leq u_2(t)\leq 30$, for every $\mu \leq (n-1) \log 2/30$.
\end{itemize}

\end{ex}
\section*{Acknowledgments}
G. Infante and S. Matucci were partially supported by G.N.A.M.P.A. - INdAM (Italy).


\begin{thebibliography}{00}

\bibitem{Amann-rev} H. Amann,
\textit{Fixed point equations and nonlinear eigenvalue problems in ordered Banach spaces},
SIAM. Rev. \textbf{18} (1976), 620--709.

\bibitem{bp} A. Boucherif and R. Precup, \textit{On the nonlocal initial
value problem for first order differential equations}, Fixed Point Theory
\textbf{4} (2003), 205--212.

\bibitem{BG} T. A. Burton and R. C. Grimmer, \textit{On the continuability of the
second order differential equation}, Proc. Amer. Math. Soc. \textbf{29} (1971), 277--283.

\bibitem{Butler} G. J. Butler, \textit{The existence of continuable solutions
of a second order order differential equation, } Can. J. Math. \textbf{XXIX} (1977), 472--479.

\bibitem{Furi} M.~Cecchi, M.~Furi and M.~Marini, \textit{On continuity and compactness of some nonlinear operators associated with differential equations in noncompact intervals}, Nonlinear Anal. \textbf{9} (1985), 171--180.

\bibitem{Coff} C. V. Coffman and D. F.  Ullrich, \textit{On the continuation of
solutions of a certain nonlinear differential equation,} Monatsh. Math. \textbf{71}
(1967), 385--392.

\bibitem{dmm:p} Z. Do\v{s}l\'a, M. Marini and S. Matucci, \textit{Positive solutions of nonlocal continuous second order BVP's}, Dynam. Systems Appl. \textbf{23} (2014),  431--446.

\bibitem{dmm:d} Z. Do\v{s}l\'a, M. Marini and S. Matucci, \textit{A Dirichlet problem on the half-line for nonlinear equations with indefinite weight}, Ann. Mat. Pura Appl. (4) \textbf{196} (2017),  51--64.

\bibitem{DM} Z. Do\v{s}l\'a and S. Matucci, \textit{Ground state solutions to nonlinear equations with p-Laplacian}, Nonlinear Anal. \textbf{184} (2019), 1--16.

\bibitem{ghz} M. Gaudenzi, P. Habets and F. Zanolin, \textit{An example of a superlinear problem with multiple positive solutions}, Atti Sem. Mat. Fis. Univ. Modena \textbf{51} (2003), 259--272.

\bibitem{Chris} C. S. Goodrich,  \textit{Pointwise conditions for perturbed Hammerstein integral equations with monotone nonlinear, nonlocal elements}, Banach J. Math. Anal. \textbf{14}, (2020), 290--312.

\bibitem{guolak} D. Guo and V. Lakshmikantham,
\textit{Nonlinear problems in abstract cones}, Academic Press, Boston,
(1988).

\bibitem{gi-tmna}
G. Infante, \textit{Nonzero positive solutions of a multi-parameter elliptic system with functional BCs}, Topol. Methods Nonlinear Anal. \textbf{52} (2018), 665--675.

\bibitem{gi-dcds}
G. Infante, \textit{Positive and increasing solutions of perturbed Hammerstein integral equations with derivative dependence},
Discrete Continuous Dyn. Syst. Ser. B. \textbf{25} (2020), 691--699.

\bibitem{H} P.~Hartman, \textit{Ordinary Differential Equations}, 2 Ed., Birk\"{a}user, Boston-Basel-Stuttgart, 1982.

\bibitem{krzab} M. A. Krasnosel'ski\u\i{} and P. P. Zabre\u{\i}ko,
{\it Geometrical methods of nonlinear analysis}, Springer-Verlag,
Berlin, (1984).

\bibitem{kqljlms} K. Q. Lan,
\textit{Multiple positive solutions of semilinear differential equations
with singularities}, J. London Math. Soc. \textbf{63}
(2001), 690--704.

\bibitem{kql-blms-06}
K. Q. Lan,
\textit{Multiple positive solutions of semi-positone Sturm-Liouville boundary value problems},
Bull. London Math. Soc. \textbf{38} (2006), 283--293.

\bibitem{mm:b} M. Marini and S. Matucci, \textit{A boundary value problem on the half-line for superlinear differential equations
with changing sign weight}, Rend. Istit. Mat. Univ. Trieste \textbf{44} (2012), 117--132.

\bibitem{m:mb} S. Matucci, \textit{A new approach for solving nonlinear BVP's on the half-line for second order equations and
applications}, Math. Bohem. \textbf{140} (2015), 153--169.

\end{thebibliography}
\end{document}